\newcommand{\half}{{\textstyle\frac{1}{2}}}
\newcommand{\reals}{\mathbb{R}}
\newcommand{\transpose}{^\textrm{T}}
\DeclareMathOperator*{\argmin}{arg\,min}
\DeclareMathOperator{\abssq}{ssq}
\newtheorem{theorem}{Theorem}
\newtheorem{lemma}{Lemma}
\newtheorem{definition}{Definition}
\newtheorem{corollary}{Corollary}
\newtheorem{remark}{Remark}
\def\BibTeX{{\rm B\kern-.05em{\sc i\kern-.025em b}\kern-.08em
    T\kern-.1667em\lower.7ex\hbox{E}\kern-.125emX}}
\title{Guaranteed Safe Spacecraft Docking with Control Barrier Functions}
\author{Joseph Breeden and Dimitra Panagou
\thanks{This work was supported by the National Science Foundation Graduate Research Fellowship Program.}%
\thanks{The authors are with the Department of Aerospace Engineering, University of Michigan, Ann Arbor, MI, USA. Email: \texttt{\{jbreeden,dpanagou\}@umich.edu}%
}
}
\definecolor{subsectioncolor}{cmyk}{0, 0, 0, 1}
\begin{document}

\maketitle
\thispagestyle{empty}

\begin{abstract}

This paper presents a strategy for control of a spacecraft docking with a non-maneuvering target in the presence of safety constraints and bounded disturbances. 
The presence of disturbances prevents convergence to a unique docking state, so in our formulation, docking is defined as occurring within a set constructed using prescribed tolerances.
Safety is ensured via application of Robust Control Barrier Functions to render a designated safe set forward invariant for any allowable disturbance. However, this safety strategy necessarily presumes a worst-case disturbance, and thus restricts 
trajectories to a subset of the safe set when a worst-case disturbance is not present. 
The presented controller accounts for this restriction, and guarantees that the spacecraft both remains safe and achieves docking in finite time for any allowable disturbance. The controller is then validated in simulation for a spacecraft landing on an asteroid, and two spacecraft docking in low Earth orbit.

\end{abstract}

\begin{IEEEkeywords}
Aerospace, Constrained control, Robust control
\end{IEEEkeywords}

\section{Introduction}


\IEEEPARstart{A}{} common requirement for spacecraft systems is the capability for one spacecraft, called the \emph{chaser}, to dock with another spacecraft, called the \emph{target}. 
Successful docking requires the satisfaction of several tolerances,
such as a minimum and maximum relative velocity, maximum displacement between docking mechanisms, and maximum spacecraft attitude deviation, among others. 
In this paper, we propose encoding these tolerances as Control Barrier Functions (CBFs) \cite{CBF_Tutorial} and applying CBF theory to guarantee tolerance satisfaction. 

There are two principal approaches to robustness with CBFs. First, a CBF can be designed to drive state trajectories that lie outside the allowable \emph{safe set} into this set \cite{Robust_CBFs,tunable_safety}. Second, a CBF can be designed to ensure that a disturbance with a known upper bound never causes the state to leave the safe set \cite{RobustCBFsOld,garg2020robust,Automatica}. In this paper, we use the latter approach so that docking tolerances are never violated. 
However, tight-tolerance objectives, such as docking, in principle require that the system operate very close to the boundary of its safe set (e.g. the chaser comes very close to the target).
In the presence of a disturbance pointing toward the interior of the safe set, or no disturbance, a robust CBF may prevent the system from approaching sufficiently close to the boundary of its safe set to execute its mission. Thus, this paper develops conditions 
under which one can
guarantee that system trajectories always remain safe, yet also approach to within the required proximity to the boundary of the safe set in finite time. While our focus is 
spacecraft docking to other spacecraft, or landing on celestial bodies, the 
developed approach
can be applied to 
operating any system near the boundary of a safe set using CBFs.

Autonomous spacecraft rendezvous and docking has been extensively studied, and addressed using several methods, including artificial potential fields (APFs) \cite{Docking_APFs,Docking_Safety_APFs,Coupled_APF_journal,Adaptive_APF}, path planning \cite{OC}, model predictive control \cite{Docking_MPC}, sliding mode control \cite{SMC}, reinforcement learning \cite{RL}, and linear control \cite{adaptive}, among others. 
While the fundamental problem almost always centers on the Hill-Clohessy-Wiltshire (HCW) dynamics, different authors have considered various constraints. The work in \cite{Coupled_APF_journal,SMC} specifically considers APFs for coupled rotation and translation, while \cite{Adaptive_APF,OC} consider fuel efficiency as well. The work in \cite{adaptive} considers adaptation to uncertain model parameters, and \cite{OC} considers a tumbling target.
However, most of the aforementioned works attempt to accomplish docking exactly, or simply report the achieved tolerances when disturbances are added, rather than provably guaranteeing satisfaction of docking tolerances. Such guarantees can be obtained systematically through
CBFs, which can be used in conjunction with all of the above methods and constraints.

The author's prior work in \cite{Automatica} unifies the topics of input constraint satisfaction and disturbance rejection in CBFs applied to spacecraft problems. This paper builds upon the work in \cite{Automatica} by considering the case when the spacecraft mission and safety requirements are opposite to each other, as in the case of docking, and thus require operations within tight tolerances. The rest of this paper is organized as follows. Section~\ref{sec:preliminaries} presents notations and precise definitions of \emph{landing} and \emph{docking}. It then presents two important lemmas from \cite{Automatica} on guaranteeing input constraint satisfaction and disturbance rejection when employing CBFs, which this paper will build upon. 
Section~\ref{sec:methods} presents methods of accomplishing landing and docking within prescribed tolerances. Section~\ref{sec:simulations} presents simulations for a spacecraft landing on an asteroid with nontrivial gravity, and docking with another spacecraft in low Earth orbit. Section~\ref{sec:conclusions} presents concluding remarks.

\section{Preliminaries} \label{sec:preliminaries}

\noindent\textbf{Notations: }
Following the conventions in \cite{Automatica}, given a function $f(t,x)$ of time $t$ and state $x$, let $\partial_t f(t,x)$ denote the derivative in time $t$, and let $\nabla f(t,x)$ denote the gradient with respect to the vector $x$. Let $\dot{f}$ denote the total derivative $\dot{f} = \partial_t f + \nabla f \dot{x}$ and $\ddot{f}$ the second total derivative. Let $\|\cdot\|$ denote the 2-norm, and $\|\cdot\|_\infty$ the $\infty$-norm. Given a function $\kappa:\reals\rightarrow\reals$, let $\kappa^{-1}:\reals\rightarrow\reals$ denote the inverse function of $\kappa$ (if it exists). A function $\alpha:\reals_{\geq 0}\rightarrow\reals_{\geq 0}$ is said to belong to class-$\mathcal{K}$, denoted as $\alpha\in\mathcal{K}$, if it is strictly increasing and $\alpha(0)=0$. Define the function $\abssq:\reals\rightarrow\reals$ as $\abssq(\lambda) = \lambda | \lambda |$, and note that $\abssq$ is continuously differentiable and invertible everywhere on $\reals$.

\vspace{2pt}
\noindent\textbf{Problem Formulation: }
We consider a system of the form
\begin{equation}
    \dot{x} = f(t,x) + g(t,x)(u + w_u) + w_x \,, \label{eq:model}
\end{equation}
with time $t\in\mathcal{D} \triangleq [t_0, t_f]$, state $x\in\reals^n$, control input $u\in\mathcal{U}\subset\reals^m$ where $\mathcal{U}$ is compact, and continuous disturbances $w_u \in \reals^m$ and $w_x\in\reals^n$. Let $f:\mathcal{D}\times\reals^n\rightarrow\reals^n$ and $g:\mathcal{D}\times\reals^n\rightarrow\reals^{n\times m}$ be locally Lipschitz continuous functions, and assume there are known constants $w_{u,\textrm{max}}, w_{x,\textrm{max}} \in \reals$ such that $\|w_u\|\leq w_{u,\textrm{max}}$ and $\|w_x\|\leq w_{x,\textrm{max}}$. For the spacecraft docking problem, the matched disturbance $w_u$ could represent unmodelled forces such as drag and higher-order gravity, while the unmatched disturbance $w_x$ could represent filtered sensor updates when $x$ is an estimate of the true state.

Let the function $h:\mathcal{D}\times\reals^n\rightarrow\reals$ be a metric for distance between the chaser and target agents, defined so that $h < 0$ when the agents are separated, and $h=0$ when the agents are in contact. When $\dot{h}>0$, the chaser is approaching the target.
We define landing and docking as follows.

\begin{definition}[Landing]\label{def:landing}
    The state $x$ is said to correspond to \emph{landing} at time $t$ if $h(t,x) = 0$ and $\dot{h}(t,x) \in [0, \gamma_2]$ simultaneously, where $\gamma_2 \geq 0$ is a specified constant.
\end{definition}

\begin{definition}[Docking]\label{def:docking}
    The state $x$ is said to correspond to \emph{docking} at time $t$ if $h(t,x) = 0$ and $\dot{h}(t,x) \in [\gamma_1, \gamma_2]$ simultaneously, where $\gamma_1, \gamma_2 > 0$ are specified constants.
\end{definition}

Note that Definitions~\ref{def:landing}-\ref{def:docking} pose the landing/docking objective as the value of $\dot{h}$ belonging to an interval, rather than requiring a single value of $\dot{h}$.
Landing and docking differ only by the requirement of a minimum velocity $\gamma_1 > 0$ for docking. In both cases, we assume $\gamma_2$ is sufficiently small to be dissipated by the spacecraft structure; i.e. landing/docking is a \emph{controlled collision}. The focus of this paper is on ensuring that landing/docking occurs within the specified range of $\dot{h}$ values in finite time.

\vspace{2pt}
\noindent
\textbf{Background: }
We will accomplish this objective in part by treating the $\dot{h}$ upper bound requirement as a safety constraint, and then employing CBF techniques to ensure safety for all time. Let $H:\mathcal{D}\times\reals^n\rightarrow\reals$ encode one such safety constraint, where $H(t,x)\leq 0$ implies that the state $x$ is in the safe set at time $t$. We say $H$ is a CBF if it meets the following criteria.

\begin{definition}[{\cite[Def. 3]{Automatica}}]\label{def:cbf}
    For the system \eqref{eq:model}, a continuously differentiable function $H: \mathcal{D}\times \reals^n \rightarrow \reals$ is a \emph{control barrier function (CBF) on a set $\mathcal{X}$} if there exists a locally Lipschitz continuous $\alpha_0\in\mathcal{K}$ such that $\forall x\in\mathcal{X}(t), t\in\mathcal{D}$
    \begin{equation}
        \max_{\substack{\|w_u\|\leq w_{u,\textrm{max}} \\ \|w_x\| \leq w_{x,\textrm{max}}}} \inf_{u\in\mathcal{U}} \dot{H}(t,x,u,w_u,w_x) \leq \alpha_0(-H(t,x)) \,. \label{eq:cbf_definition}
    \end{equation}
\end{definition}

That is, $H$ is a CBF on $\mathcal{X}$ (where $\mathcal{X}$ can be any specified set for the purposes of Definition~\ref{def:cbf}) if there exists an allowable control input $u\in\mathcal{U}$ such that $\dot{H}$ is less than $\alpha_0$ even in the presence of a worst-case disturbance. 
For compactness, define
\begin{equation}
    \hspace{-2pt}W(t,x) \triangleq \| \nabla H(t,x) g(t,x)\| w_{u,\textrm{max}} + \| \nabla H(t,x) \| w_{x,\textrm{max}}, \hspace{-3pt} \label{eq:def_W}
\end{equation}
which represents the maximum contribution of the disturbances to 
$\dot{H}$. That is, $\dot{H}(t,x,u,w_u,w_x) \in [\dot{H}(t,x,u,0,0) - W(t,x),\, \dot{H}(t,x,u,0,0) + W(t,x)]$.
We can then establish safety with respect to the constraint $H$ as follows.

\begin{lemma}[{\cite[Cor. 17]{Automatica}}]\label{prior:cbf}
    Suppose $H:\mathcal{D}\times\reals^n\rightarrow\reals$ is a CBF on the set $\mathcal{S}(t) \triangleq \{x\in\reals^n\mid H(t,x) \leq 0\}$ 
    for the system \eqref{eq:model}. Suppose there exists constants $\eta_1, \eta_2 > 0$ such that $W$ in \eqref{eq:def_W} satisfies $W(t,x) \in [\eta_1,\eta_2], \forall x\in\mathcal S(t), t\in\mathcal{D}$. Let $\alpha_w\in\mathcal{K}$ be locally Lipschitz continuous. Then any control law $u(t,x)$ that is piecewise continuous in $t$ and locally Lipschitz continuous in $x$, and that satisfies: $\forall x\in\mathcal{S}(t), t\in\mathcal{D}$,
    \begin{equation}
       \hspace{-5pt} \partial_t H(\cdot) + \nabla H(\cdot) (f(\cdot) + g(\cdot) u(\cdot)) \hspace{-1pt} \leq \hspace{-1pt} \alpha_w(-H(\cdot)) W(\cdot) - W(\cdot) \hspace{-4pt} \label{eq:cbf_condition}
    \end{equation}
    where $(\cdot) = (t,x)$, will render the set $\mathcal{S}$ forward invariant.
\end{lemma}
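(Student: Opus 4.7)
The plan is to show that the hypothesized control law forces $H(t,x(t))$ to remain non-positive along trajectories of \eqref{eq:model}, which is exactly forward invariance of $\mathcal{S}$.

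First, I would expand the total derivative of $H$ along the closed-loop system, separating the controlled terms from the disturbance terms:
\begin{equation*}
\dot{H} = \bigl[\partial_t H + \nabla H\,(f+gu)\bigr] + \nabla H\,g\,w_u + \nabla H\,w_x.
\end{equation*}
By Cauchy--Schwarz together with the bounds $\|w_u\|\leq w_{u,\textrm{max}}$ and $\|w_x\|\leq w_{x,\textrm{max}}$, the two disturbance terms are bounded above by $W(t,x)$ as defined in \eqref{eq:def_W}. Substituting this bound together with the assumed inequality \eqref{eq:cbf_condition} yields the essential pointwise estimate $\dot{H}\leq \alpha_w(-H)\,W$, valid for every $x\in\mathcal{S}(t)$ and every admissible disturbance. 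The $-W$ term in \eqref{eq:cbf_condition} is precisely what cancels the worst-case disturbance contribution, so the bound is tight.

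Next, since $\alpha_w\in\mathcal{K}$ is locally Lipschitz and $W\leq\eta_2$ on $\mathcal{S}(t)$, the function $\beta(r)\triangleq \eta_2\,\alpha_w(r)$ is again class-$\mathcal{K}$ and locally Lipschitz, and we obtain the scalar differential inequality $\dot{H}\leq \beta(-H)$ on $\mathcal{S}(t)$. Because $\beta(0)=0$, whenever $H(t,x(t))=0$ we have $\dot{H}\leq 0$, which is the Nagumo-type sub-tangent condition for forward invariance of $\{H\leq 0\}$. To make this rigorous I would compare $H(t,x(t))$ with the solution $z$ of the scalar initial-value problem $\dot{z}=\beta(-z)$, $z(t_0)=H(t_0,x(t_0))\leq 0$. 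Since $z\equiv 0$ is a solution of this ODE from the initial value $0$, Lipschitz uniqueness forces any solution starting at a non-positive value to remain non-positive for all $t$. The Lipschitz continuity of $\alpha_w$ together with the Lipschitz assumptions on $f$, $g$, and $u$ (which guarantee existence and uniqueness of the closed-loop trajectory) then lets the standard comparison lemma conclude that $H(t,x(t))\leq z(t)\leq 0$ for all $t\in\mathcal{D}$, i.e., $x(t)\in\mathcal{S}(t)$.

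The main obstacle is the care required at the boundary $H=0$: because the hypothesized inequality \eqref{eq:cbf_condition} is only assumed on $\mathcal{S}(t)$, one cannot blindly invoke a global comparison theorem. I would handle this by a first-exit-time argument, letting $\tau=\inf\{t:H(t,x(t))>0\}$, applying the estimate $\dot{H}\leq \beta(-H)$ on $[t_0,\tau]$, and using $\dot{H}(\tau)\leq \beta(0)=0$ to contradict the definition of $\tau$, or equivalently by working with the extension $\tilde\beta(r)=\beta(\max(r,0))$ which is still locally Lipschitz. The roles of the two bounds on $W$ are complementary: the upper bound $\eta_2$ supplies the dominating class-$\mathcal{K}$ function $\beta$ needed for the comparison, while the lower bound $\eta_1>0$, although it does not enter the invariance inequality itself, is what ultimately guarantees via Definition~\ref{def:cbf} that a piecewise continuous, locally Lipschitz $u(t,x)$ satisfying \eqref{eq:cbf_condition} actually exists in $\mathcal{U}$ for the closed-loop dynamics to be well posed.
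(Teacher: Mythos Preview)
The paper does not prove this lemma; it is quoted verbatim as \cite[Cor.~17]{Automatica} and used as background, so there is no in-paper argument to compare against. Your proof is nonetheless correct and is the standard route one would expect for such a result: expand $\dot H$, absorb the disturbance terms into $W$ via Cauchy--Schwarz so that the $-W$ in \eqref{eq:cbf_condition} cancels the worst case, obtain $\dot H \le \alpha_w(-H)\,W \le \eta_2\,\alpha_w(-H)$ on $\mathcal S(t)$, and then invoke a comparison/Nagumo argument at $H=0$. Your handling of the boundary via a first-exit-time contradiction (or an extended $\tilde\beta$) is exactly the care the problem needs, since \eqref{eq:cbf_condition} is only assumed on $\mathcal S(t)$.

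One small remark on your closing paragraph: the role you assign to $\eta_1>0$ is slightly off. The lower bound does not enter the forward-invariance argument at all (as you note), nor is it what Definition~\ref{def:cbf} uses to guarantee existence of a feasible $u$. Rather, as the paper comments immediately after the lemma, the pair $\eta_1,\eta_2$ together is what ensures that \emph{some} $\alpha_w\in\mathcal K$ makes \eqref{eq:cbf_condition} achievable given \eqref{eq:cbf_definition}; and $\eta_1>0$ is later exploited in Lemma~\ref{lemma:approach} to get a strict increase of $H$ away from $\partial_{\alpha_w^{-1}(2)}\mathcal S$. This does not affect the validity of your invariance proof.
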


\noindent
Note that by Definition~\ref{def:cbf}, if $H$ is a CBF on $\mathcal S$ and $W$ is bounded by $\eta_1$, $\eta_2$ as in Lemma~\ref{prior:cbf}, then there is at least one $\alpha_w\in\mathcal{K}$ for which there always exists a $u\in\mathcal{U}$ satisfying \eqref{eq:cbf_condition} for all $x\in\mathcal{S}(t), t\in\mathcal{D}$.
The set $\mathcal{S}$ is called the \emph{safe set}.

In this paper, we assume the function $h$ is of relative-degree 2 with respect to \eqref{eq:model}, and therefore not a CBF, so we construct a CBF from $h$ as follows. 
Since $h$ is of relative-degree 2, $\dot{h}$ is independent of $u$ and $w_u$, so define \cite[Eq. 13]{Automatica}
\begin{equation}
    \dot{h}_w(t,x) \triangleq \max_{\|w_x\| \leq w_{x,\textrm{max}} } \dot{h}(t,x,w_x) \,,
\end{equation}
which captures the worst-case value of $\dot{h}$ due to the unmatched disturbance. Assume $\|\nabla h\|$ does not vanish, so that $\dot{h}_w$ is differentiable. Recall the following result from \cite{Automatica}. 
\begin{lemma}[{\cite[Thm. 9]{Automatica}}]\label{prior:variable}
    Suppose $h$ is of relative-degree 2 with respect to \eqref{eq:model} and that there exists an invertible, monotone decreasing, and continuously-differentiable function $\Phi:\reals\rightarrow\reals$, whose derivative is $\phi:\reals\rightarrow\reals$, such that
    \begin{equation}
        \max_{\substack{\|w_u\|\leq w_{u,\textrm{max}} \\ \|w_x\| \leq w_{x,\textrm{max}}}} \inf_{u\in\mathcal{U}} \ddot{h}_w(t,x,u,w_u,w_x) \leq \phi(h(t,x)) \leq 0 \label{eq:variable_cbf_condition}
    \end{equation}
    for all $(t,x)$ such that $h(t,x) \leq 0$. Let $\Phi^{-1}:\reals\rightarrow\reals$ be the inverse function of $\Phi$. Then the function
    \begin{equation}
        H_0(t,x) \triangleq \Phi^{-1} \left( \Phi (h(t,x)) - \frac{1}{2} \abssq\left(\dot{h}_w(t,x)\right) \right) \label{eq:variable_cbf}
    \end{equation}
    is a CBF on the set $\mathcal{S}_0(t) \triangleq \{ x \in \reals^n \mid H_0(t,x) \leq 0 \textrm{ and } h(t,x) \leq 0\}$. Moreover, condition \eqref{eq:cbf_definition} is satisfied for any $\alpha_0\in\mathcal{K}$ on $\mathcal{S}_0$, and any controller satisfying the conditions of Lemma~\ref{prior:cbf} on $\mathcal{S}_0$ will render $\mathcal{S}_0$ forward invariant.
\end{lemma}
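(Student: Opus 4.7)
My plan is to verify the CBF inequality for $H_0$ by implicit differentiation of the defining identity $\Phi(H_0) = \Phi(h) - \tfrac{1}{2}\abssq(\dot{h}_w)$. Invertibility of $\Phi$ combined with $\phi(h)\leq 0$ on $\{h\leq 0\}$ forces $\phi<0$ everywhere (else $\Phi$ would fail to be strictly monotonic), so the chain rule---together with $\tfrac{d}{d\lambda}\abssq(\lambda) = 2|\lambda|$---yields $\phi(H_0)\,\dot{H}_0 = \phi(h)\,\dot{h} - |\dot{h}_w|\,\ddot{h}_w$. Because $h$ has relative degree two with respect to \eqref{eq:model}, $\dot{h}$ does not depend on $u$ or $w_u$, so all of the control authority in $\dot{H}_0$ flows through the $\ddot{h}_w$ term.

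Next I would use hypothesis \eqref{eq:variable_cbf_condition} to choose $u\in\mathcal{U}$ enforcing $\ddot{h}_w \leq \phi(h) \leq 0$ against the worst-case disturbance. Substituting this bound gives $\phi(h)\dot{h} - |\dot{h}_w|\ddot{h}_w \geq \phi(h)(\dot{h} - |\dot{h}_w|)$. Since $\dot{h}\leq\dot{h}_w\leq|\dot{h}_w|$ (the first inequality by the definition of $\dot{h}_w$ as a worst case, the second trivially), the factor $\dot{h} - |\dot{h}_w|$ is non-positive, so multiplying by $\phi(h)\leq 0$ leaves the product non-negative. Dividing through by $\phi(H_0)<0$ then flips the sign to give $\dot{H}_0 \leq 0$. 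Because $-H_0\geq 0$ on $\mathcal{S}_0$ and class-$\mathcal{K}$ functions are non-negative on $\reals_{\geq 0}$, we conclude $\max_w\inf_u \dot{H}_0 \leq 0 \leq \alpha_0(-H_0)$ for every $\alpha_0\in\mathcal{K}$, which simultaneously establishes that $H_0$ is a CBF on $\mathcal{S}_0$ and that \eqref{eq:cbf_definition} holds with any $\alpha_0$.

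Finally, for forward invariance of $\mathcal{S}_0 = \{H_0\leq 0\}\cap\{h\leq 0\}$, Lemma~\ref{prior:cbf} applied to $H_0$ already keeps trajectories in $\{H_0\leq 0\}$, so I only need to handle the second constraint on the boundary $\{h=0\}$. At such points, $H_0\leq 0$ combined with monotonicity of $\Phi^{-1}$ forces $\abssq(\dot{h}_w)\leq 0$, whence $\dot{h}_w\leq 0$ and therefore $\dot{h}\leq \dot{h}_w \leq 0$, so trajectories cannot cross $h=0$ outward. The main bookkeeping hurdle will be the sign analysis in the chain-rule step---specifically verifying $\dot{h} \leq |\dot{h}_w|$ regardless of the sign of $\dot{h}_w$, and tracking that $\phi$ stays strictly negative on the range of $H_0$ so that the division is legitimate and $H_0$ is genuinely continuously differentiable---but both follow routinely from the structural assumptions on $\Phi$ and the worst-case definition of $\dot{h}_w$.
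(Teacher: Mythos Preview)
The paper does not actually prove this lemma: it is stated as background imported verbatim from \cite[Thm.~9]{Automatica}, and the only nearby remark is that Theorem~\ref{thm:new_cbf} ``follows the same logic as \cite[Thm.~9]{Automatica} and is omitted for brevity.'' There is therefore no in-paper proof to compare against.

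That said, your argument is correct and is the natural route. Differentiating the defining relation $\Phi(H_0)=\Phi(h)-\tfrac12\abssq(\dot h_w)$, using $\tfrac{d}{d\lambda}\abssq(\lambda)=2|\lambda|$, and then invoking \eqref{eq:variable_cbf_condition} to bound $\ddot h_w$ by $\phi(h)$ is exactly how one unwinds the construction; the sign bookkeeping $\phi(h)(\dot h-|\dot h_w|)\geq 0$ is clean, and your treatment of the extra constraint $h\leq 0$ via $\abssq(\dot h_w)\leq 0\Rightarrow \dot h\leq\dot h_w\leq 0$ on $\{h=0\}$ matches the mechanism the paper itself relies on later (cf.\ the proof of Theorem~\ref{thm:safety} and the discussion of $\mathcal{S}\setminus\mathcal{S}_0$ after Lemma~\ref{prior:variable}). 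The only caveats worth flagging are minor: strict negativity of $\phi$ is needed for the division step, and while global invertibility of $\Phi$ together with $\phi\leq 0$ on a set does not \emph{literally} exclude isolated zeros of $\phi$, the paper tacitly reads \eqref{eq:variable_cbf_condition} as $\phi(\lambda)\leq 0$ for all $\lambda$ and uses strictly negative $\phi$ in every instance, so this is consistent with the intended hypotheses.
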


The distinction between the sets $\mathcal S$ in Lemma~\ref{prior:cbf} with $H=H_0$ and $\mathcal S_0$ in Lemma~\ref{prior:variable} is a technicality that arises because the set $\mathcal{S}\setminus\mathcal{S}_0$ where $H_0 \leq 0$ and $h>0$ is unreachable from $\mathcal{S}_0$; see \cite[Lemma 6]{Automatica} for more information. Physically, the function $\Phi$ in Lemma~\ref{prior:variable} represents a potential field, for example gravitational potential of a spacecraft around a gravitational source, and $H_0$ is analogous to potential energy, where the agent must maintain a specified minimum potential energy to remain ``safe'', i.e. to remain in $\mathcal{S}_0$. For our purposes, Lemma~\ref{prior:variable} is used to 1) convert the metric $h$ into a valid CBF under $\mathcal{U}$ that we can then use for control design, and 2) define the set of allowable initial conditions $x(t_0) \in \mathcal{S}_0(t_0)$.

\section{Methods} \label{sec:methods}

We divide the landing/docking problem into two parts: \emph{robust safety} in Section~\ref{subsec:safety} and \emph{robust proximity} in Section~\ref{subsec:convergence}. Robust safety refers to the requirement that, under any allowable disturbances $w_u,w_x$ in \eqref{eq:model}, $\dot{h}(t,x(t))\leq \gamma_2$ for all $t$ such that $h(t,x(t))= 0$. 
Robust proximity refers to the requirements that 1) $h(t,x(t)) = 0$ for finite $t$, and 2) $\dot{h}(t,x(t)) \geq \gamma_1$ for docking, or $\dot{h}(t,x(t)) \geq 0$ for landing, at the time when $h(t,x(t)) = 0$.

\subsubsection{Robust Safety} \label{subsec:safety}

The set $\mathcal{S}_0$ in Lemma~\ref{prior:variable} does not contain any states such that $h = 0$ and $\dot{h} > 0$ simultaneously (as such states would immediately leave $\mathcal{S}_0$), so for the same function $\Phi$ as in \eqref{eq:variable_cbf_condition}-\eqref{eq:variable_cbf} define the new function
\begin{equation}
    \hspace{-2pt} H_1(t,x) \triangleq \Phi^{-1} \left( \Phi(h(t,x)) - \frac{1}{2}\abssq\left(\dot{h}_w(t,x)\right) + \delta \right) \hspace{-2pt} \label{eq:new_cbf}
\end{equation}
for some parameter $\delta \geq 0$. This expands the set $\mathcal{S}_0$ to the set $\mathcal{S}_1(t) \triangleq \{ x\in\reals^n \mid H_1(t,x) \leq 0 \textrm{ and } h(t,x)\leq d\triangleq \Phi^{-1}(\Phi(0) - \delta)\}$, where $d \geq 0$. Unlike $\mathcal{S}_0$, the set $\mathcal{S}_1$ contains docking states. First, we note that $H_1$ is also a CBF.

\begin{theorem}\label{thm:new_cbf}
    Suppose the conditions of Lemma~\ref{prior:variable} hold for all $(t,x)$ such that $h(t,x)\leq d$. Then $H_1:\mathcal{D}\times\reals^n\rightarrow\reals$ in \eqref{eq:new_cbf} is a CBF on the set $\mathcal{S}_1$. Moreover, condition \eqref{eq:cbf_definition} is satisfied for any $\alpha_0\in\mathcal{K}$ on $\mathcal{S}_1$, and any controller satisfying the conditions of Lemma~\ref{prior:cbf} on $\mathcal{S}_1$ will render $\mathcal{S}_1$ forward invariant.
\end{theorem}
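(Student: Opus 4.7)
The plan is to reduce the claim to Lemma~\ref{prior:variable} via a change of variables that absorbs the parameter $\delta$ into a shifted metric. Define $\tilde{h}(t,x) \triangleq h(t,x) - d$ and the shifted potential $\tilde{\Phi}(y) \triangleq \Phi(y + d)$, whose derivative is $\tilde{\phi}(y) = \phi(y+d)$. Since $\Phi$ is invertible and continuously differentiable, so is $\tilde{\Phi}$, with $\tilde{\Phi}^{-1}(z) = \Phi^{-1}(z) - d$. The region $\tilde{h} \leq 0$ corresponds exactly to $h \leq d$, which is precisely where the hypothesis of Lemma~\ref{prior:variable} is assumed to hold. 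Moreover $\dot{\tilde{h}}_w = \dot{h}_w$ and $\ddot{\tilde{h}}_w = \ddot{h}_w$, so the inequality $\max \inf \ddot{\tilde{h}}_w \leq \phi(h) = \tilde{\phi}(\tilde{h}) \leq 0$ holds throughout $\tilde{h} \leq 0$.

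Applying Lemma~\ref{prior:variable} to the pair $(\tilde{h}, \tilde{\Phi})$ yields that $\tilde{H}_0(t,x) \triangleq \tilde{\Phi}^{-1}\bigl(\tilde{\Phi}(\tilde{h}) - \frac{1}{2}\abssq(\dot{\tilde{h}}_w)\bigr)$ is a CBF on $\tilde{\mathcal{S}}_0 \triangleq \{x \mid \tilde{H}_0 \leq 0,\, \tilde{h} \leq 0\}$, with condition \eqref{eq:cbf_definition} satisfied for any $\alpha_0 \in \mathcal{K}$. Using $\tilde{\Phi}(\tilde{h}) = \Phi(h)$ and $\tilde{\Phi}^{-1}(z) = \Phi^{-1}(z) - d$, a direct computation gives $\tilde{H}_0 = H_0 - d$. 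Substituting $d = \Phi^{-1}(\Phi(0) - \delta)$ and invoking the strict monotonicity of $\Phi$ (implied by invertibility together with $\phi \leq 0$) yields the equivalences $\tilde{H}_0 \leq 0 \Leftrightarrow H_0 \leq d \Leftrightarrow H_1 \leq 0$, so $\tilde{\mathcal{S}}_0 = \mathcal{S}_1$ as sets.

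It remains to transfer the CBF inequality from $\tilde{H}_0$ to $H_1$. Observe that $H_1 = F(\tilde{H}_0)$, where $F(y) \triangleq \Phi^{-1}(\Phi(y+d) + \delta)$ is a composition of two strictly decreasing functions and is therefore strictly increasing with $F(0) = 0$. Because $F'(\tilde{H}_0)$ is positive and independent of $u$, $w_u$, $w_x$, the chain rule gives
\begin{equation*}
    \max_{w_u,w_x} \inf_{u\in\mathcal{U}} \dot{H}_1 \;=\; F'(\tilde{H}_0) \cdot \max_{w_u,w_x} \inf_{u\in\mathcal{U}} \dot{\tilde{H}}_0.
\end{equation*}
Lemma~\ref{prior:variable}'s guarantee that \eqref{eq:cbf_definition} holds for \emph{every} $\alpha_0 \in \mathcal{K}$ forces the right-hand factor to be $\leq 0$ throughout $\tilde{\mathcal{S}}_0$, so $\max \inf \dot{H}_1 \leq 0 \leq \alpha_0(-H_1)$ for all $\alpha_0 \in \mathcal{K}$ on $\mathcal{S}_1$. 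This verifies Definition~\ref{def:cbf}, and forward invariance of $\mathcal{S}_1$ under any controller satisfying \eqref{eq:cbf_condition} follows by applying Lemma~\ref{prior:cbf} directly to $H_1$.

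The main obstacle I foresee is cleanly justifying the sign manipulations in the chain of equivalences $\tilde{H}_0 \leq 0 \Leftrightarrow H_1 \leq 0$ and the strict positivity of $F'$, both of which rely on $\Phi$ being strictly decreasing rather than merely nonincreasing. This is implicit in the invertibility hypothesis of Lemma~\ref{prior:variable} combined with $\phi \leq 0$, but needs careful bookkeeping so that the reduction argument does not silently assume additional structure.
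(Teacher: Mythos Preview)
Your reduction is correct and takes a genuinely different route from the paper. The paper gives no proof, stating only that it ``follows the same logic as \cite[Thm.~9]{Automatica}''---that is, one is meant to re-run the argument behind Lemma~\ref{prior:variable} with the extra $+\delta$ term carried through. You instead absorb $\delta$ into the shifted metric $\tilde h = h - d$ and potential $\tilde\Phi(\cdot)=\Phi(\cdot+d)$, apply Lemma~\ref{prior:variable} as a black box to $(\tilde h,\tilde\Phi)$, and then pull the conclusion back through the monotone reparametrization $H_1=F(\tilde H_0)$. This is more economical because Lemma~\ref{prior:variable} is never re-opened; the paper's intended route is more direct but presumes access to that proof. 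Your concern about strict monotonicity is well-placed but harmless: invertibility of $\Phi$ together with $\phi\le 0$ already forces $\Phi$ to be strictly decreasing, and at any point where $\phi$ (hence $F'$) vanishes, $\dot H_1=0\le \alpha_0(-H_1)$ holds trivially on $\mathcal S_1$.

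There is one small gap in your last sentence. Invoking Lemma~\ref{prior:cbf} ``directly to $H_1$'' only yields forward invariance of $\{x:H_1(t,x)\le 0\}$, which strictly contains $\mathcal S_1=\{H_1\le 0\}\cap\{h\le d\}$; this is exactly the technicality the paper flags after Lemma~\ref{prior:variable} and handles via \cite[Lemma~6]{Automatica}. The cleanest fix within your own framework is to stay inside the reduction: Lemma~\ref{prior:variable} applied to $(\tilde h,\tilde\Phi)$ already delivers forward invariance of $\tilde{\mathcal S}_0=\mathcal S_1$ for controllers satisfying \eqref{eq:cbf_condition} with respect to $\tilde H_0$, and since $\nabla H_1=F'(\tilde H_0)\,\nabla\tilde H_0$ with $F'\ge 0$, a controller satisfying \eqref{eq:cbf_condition} for $H_1$ with $\alpha_w\in\mathcal K$ satisfies \eqref{eq:cbf_condition} for $\tilde H_0$ with the class-$\mathcal K$ function $\lambda\mapsto\alpha_w\!\bigl(-F(-\lambda)\bigr)$, so the invariance conclusion transfers.
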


The proof of Theorem~\ref{thm:new_cbf} follows the same logic as \cite[Thm. 9]{Automatica} and is omitted for brevity. More importantly, the set $\mathcal{S}_1$ allows us to upper bound $\dot{h}$ when $h = 0$ as follows.

\begin{theorem} \label{thm:safety}
    If a trajectory $x(t),t\in\mathcal{D}$ satisfying $H_1(t,x(t))\leq 0, \forall t\in\mathcal{D}$ contains a point $(t_f,x(t_f))$ such that $h(t_f,x(t_f)) = 0$, then $\dot{h}(t_f,x(t_f)) \leq \sqrt{2\delta}$.
\end{theorem}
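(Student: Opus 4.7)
The plan is to unwind the definition of $H_1$ by algebraic manipulation. First I would establish that $\Phi$ is strictly decreasing on $(-\infty, d]$: Theorem~\ref{thm:new_cbf} extends the hypothesis of Lemma~\ref{prior:variable} to all $(t,x)$ with $h(t,x) \leq d$, so $\phi(h) \leq 0$ on this range, and since $\Phi$ is invertible it must be strictly monotonic, hence strictly decreasing. Consequently $\Phi^{-1}$ is also strictly decreasing.

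Next, I would apply $\Phi$ to both sides of the hypothesis $H_1(t_f, x(t_f)) \leq 0$. Because $\Phi$ reverses inequalities, this yields
\begin{equation*}
\Phi(h(t_f, x(t_f))) - \tfrac{1}{2}\abssq(\dot{h}_w(t_f, x(t_f))) + \delta \geq \Phi(0).
\end{equation*}
Substituting the assumption $h(t_f, x(t_f)) = 0$ cancels the $\Phi$ terms, leaving $\abssq(\dot{h}_w(t_f, x(t_f))) \leq 2\delta$.

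Finally, I would invert $\abssq$: since $\abssq$ is strictly increasing (it equals $\lambda^2$ for $\lambda \geq 0$ and $-\lambda^2$ for $\lambda < 0$) and satisfies $\abssq(\sqrt{2\delta}) = 2\delta$ for $\delta \geq 0$, the bound becomes $\dot{h}_w(t_f, x(t_f)) \leq \sqrt{2\delta}$. The claim follows because $\dot{h}(t_f, x(t_f)) \leq \dot{h}_w(t_f, x(t_f))$ by the definition of $\dot{h}_w$ as the maximum of $\dot{h}$ over admissible $w_x$.

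The main subtlety rather than obstacle is bookkeeping on the direction of monotonicity: every application of $\Phi$ or $\Phi^{-1}$ flips the inequality, and the sign convention chosen for $\delta$ in \eqref{eq:new_cbf} is precisely what makes the resulting bound an \emph{upper} bound on $\dot{h}$. The use of $\abssq$ rather than a plain square is what permits a one-step inversion here and keeps the bound meaningful even when $\dot{h}_w$ is negative, though for this particular theorem only the upper side of the bound is exercised.
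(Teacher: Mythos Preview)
Your proposal is correct and follows essentially the same approach as the paper: apply $\Phi$ to both sides of $H_1\le 0$, substitute $h=0$ to cancel the $\Phi$-terms, and invert $\abssq$ together with $\dot h\le \dot h_w$ to obtain the bound. If anything, you are slightly more explicit than the paper in justifying why $\Phi$ (and hence $\Phi^{-1}$) reverses inequalities.
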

\begin{proof}
    The proof follows from the construction of $H_1$. Suppose there exists $t_f$ such that $h(t_f,x(t_f)) = 0$ and $H_1(t_f,x(t_f)) \leq 0$. Then
    \begin{align*}
        0 &\geq H_1(\cdot) = \Phi^{-1}\left( \Phi(h(\cdot)) - \frac{1}{2}\abssq \left(\dot{h}_w(\cdot)\right) + \delta \right), \\
        \Phi(0) &\leq \Phi\Big(\underbrace{h(t_f,x(t_f))}_{=0}\Big) - \frac{1}{2}\abssq\left(\dot{h}_w(t_f,x(t_f))\right) + \delta, \\
        0 &\leq -\frac{1}{2} \dot{h}_w(t_f, x(t_f)) | \dot{h}_w(t_f,x(t_f))| + \delta.
    \end{align*}
    Thus, $\sqrt{2\delta} \geq \dot{h}_w(t_f,x(t_f)) \geq \dot{h}(t_f,x(t_f))$.
\end{proof}

We then let $\delta = \frac{1}{2}\gamma_2^2$ and then any controller which renders $\mathcal{S}_1$ forward invariant will also ensure the landing/docking velocity meets the upper bound requirement in Definitions~\ref{def:landing}-\ref{def:docking}.

\subsubsection{Robust Proximity} \label{subsec:convergence}

Given a robustly safe control input from \eqref{eq:cbf_condition} and \eqref{eq:new_cbf}, the next problem is that of ensuring the trajectory reaches a landing/docking state, denoted $x_f$, under any allowable disturbances $w_u,w_x$. Note that the state $x_f$ is not necessarily an equilibrium of the system \eqref{eq:model}, so we do not require convergence to $x_f$. Rather, we require that the trajectory passes through an $x_f=x(t_f)$ meeting our criteria. Note that this paper does not consider the system evolution after the first time instance $t_f$ when landing/docking is achieved. 

Because of the disturbances, we cannot guarantee convergence of $h, \dot{h}, H$ to specific values. However, we can guarantee bounds on $H$, and by consequence $h$ and $\dot{h}$ as well.
To capture the possible impacts of the disturbances, define the set
\begin{equation}
    \partial_\epsilon \mathcal{S} \triangleq \{ x\in\mathcal{S} \mid H(t,x) \geq -\epsilon \} \label{eq:partial_set}
\end{equation}
where $\epsilon$ is a parameter. Define $\partial_\epsilon\mathcal{S}_0$ and $\partial_\epsilon\mathcal{S}_1$ similarly. 
While Lemma~\ref{prior:cbf} upper bounds $H$ 
(and by consequence upper bounds $\dot{h}$ in Theorem~\ref{thm:safety}),
the following result allows us to also lower bound $H$ for any allowable disturbances.

\begin{lemma}\label{lemma:approach}
    Suppose $H:\mathcal{D}\times\reals^n$ satisfies the assumptions of Lemma~\ref{prior:cbf}. If the control input $u(t,x)$ satisfies \eqref{eq:cbf_condition} with equality for all $x\in\mathcal{S}(t),t\in\mathcal{D}$, for some $\alpha_w\in\mathcal{K}$ in \eqref{eq:cbf_condition}, and the initial condition satisfies $x(t_0) \in \mathcal{S}(t_0)$, then $\lim_{t \rightarrow\infty} x(t) \in \partial_{\alpha_w^{-1}(2)} \mathcal{S}$ (i.e. $\partial_\epsilon\mathcal{S}$ as in \eqref{eq:partial_set} with $\epsilon=\alpha_w^{-1}(2)$).
\end{lemma}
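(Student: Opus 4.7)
The plan is to translate \eqref{eq:cbf_condition} with equality into a scalar differential inequality for $V(t) \triangleq -H(t,x(t))$ and then apply the scalar comparison lemma. First I would expand the closed-loop derivative
\begin{equation*}
\dot{H} = \partial_t H + \nabla H\bigl(f + g(u+w_u) + w_x\bigr),
\end{equation*}
and substitute $\partial_t H + \nabla H(f+gu) = \alpha_w(-H)W - W$ coming from \eqref{eq:cbf_condition} with equality, obtaining $\dot{H} = \alpha_w(-H)W - W + \nabla H\,g\,w_u + \nabla H\,w_x$. Cauchy--Schwarz combined with $\|w_u\| \leq w_{u,\textrm{max}}$ and $\|w_x\| \leq w_{x,\textrm{max}}$ shows that the last two terms have absolute value at most $W$ as defined in \eqref{eq:def_W}. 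Hence $\dot{H} \geq (\alpha_w(-H)-2)W$, equivalently $\dot{V} \leq (2-\alpha_w(V))W$.

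The main obstacle is that $W = W(t,x)$ is state-dependent, so the scalar comparison lemma does not apply to this inequality directly. The resolution is to exploit the sign structure of the right-hand side: since $\alpha_w \in \mathcal{K}$ is strictly increasing, $2-\alpha_w(V)$ is positive for $V < V^\star \triangleq \alpha_w^{-1}(2)$, zero at $V=V^\star$, and negative for $V > V^\star$. Combined with the uniform bound $W \in [\eta_1,\eta_2]$ inherited from Lemma~\ref{prior:cbf}, the largest value of $(2-\alpha_w(V))W$ as a function of $V$ alone is $(2-\alpha_w(V))\eta_2$ for $V \leq V^\star$ and $(2-\alpha_w(V))\eta_1$ for $V > V^\star$. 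Gluing these two pieces yields a continuous, locally Lipschitz function $\bar{\psi}$ depending only on $V$, satisfying $\bar{\psi}(V^\star) = 0$, $\bar{\psi}(V) > 0$ for $V < V^\star$, $\bar{\psi}(V) < 0$ for $V > V^\star$, and $\dot V \leq \bar{\psi}(V)$ along the trajectory.

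Finally, I would apply the scalar comparison lemma to conclude $V(t) \leq y(t)$, where $y$ solves the autonomous ODE $\dot y = \bar{\psi}(y)$ with $y(t_0) = V(t_0) \geq 0$; nonnegativity follows from $x(t_0) \in \mathcal{S}(t_0)$ together with the forward invariance of $\mathcal{S}$ provided by Lemma~\ref{prior:cbf}. Because $V^\star$ is the unique, globally attracting equilibrium of $\bar{\psi}$, $y(t) \to V^\star$ as $t \to \infty$, and therefore $\limsup_{t\to\infty} V(t) \leq V^\star$. Reinterpreting this in terms of $H$ via \eqref{eq:partial_set} gives the asymptotic attraction of $x(t)$ to $\partial_{\alpha_w^{-1}(2)}\mathcal{S}$ asserted by the lemma.
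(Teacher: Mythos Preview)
Your proposal is correct and follows essentially the same route as the paper: both expand $\dot H$, substitute \eqref{eq:cbf_condition} with equality, bound the disturbance terms by $W$ via \eqref{eq:def_W}, and arrive at the key inequality $\dot H \geq (\alpha_w(-H)-2)W$. The only difference is in how the asymptotic conclusion is drawn: the paper argues directly that $\dot H>0$ whenever $H<-\alpha_w^{-1}(2)$ (using $W\geq\eta_1$) and hence $\liminf_{t\to\infty}H\geq -\alpha_w^{-1}(2)$, whereas you build the majorizing scalar field $\bar\psi$ and invoke the comparison lemma. Your treatment is slightly more explicit about why $H$ cannot re-enter the region $\{H<-\alpha_w^{-1}(2)\}$ once it leaves, but the underlying argument is the same.
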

\begin{proof}
    First, since
    $u(t,x)$ satisfies \eqref{eq:cbf_condition}, Lemma~\ref{prior:cbf} implies that closed-loop trajectories cannot leave $\mathcal{S}$ and thus $\limsup_{t\rightarrow\infty}H(t,x(t))\leq 0$. Moreover, the derivative of $H$ in the presence of disturbances is lower bounded by
    \begin{align}
        \dot{H} &= \partial_t H + \nabla H(f + g(u+w_u) + w_x) \nonumber \\
        &= \partial_t H + \nabla H(f + g u) + W + \nabla H g w_u + \nabla H w_x - W \nonumber \\
        &\overset{\eqref{eq:cbf_condition}}{=} \alpha_w(-H)W + \nabla H g w_u + \nabla H w_x - W \nonumber \\
        &\overset{\eqref{eq:def_W}}{\geq} \alpha_w(-H)W - 2W \label{eq:safety_proof}
    \end{align}
    where we omit the arguments $t,x$ for brevity. Since $W \geq \eta_1 > 0$ in Lemma~\ref{prior:cbf}, \eqref{eq:safety_proof} implies that $\dot{H}$ is strictly positive whenever $\alpha_w(-H) > 2$, or equivalently when $H < -\alpha_w^{-1}(2)$. It immediately follows that $\liminf_{t\rightarrow\infty} H(t,x(t)) \geq -\alpha_w^{-1}(2)$. Thus, as $t\rightarrow\infty$, the state $x(t)$ approaches $\partial_{\alpha_w^{-1}(2)} \mathcal{S}$.
\end{proof}

Thus, regardless of the disturbance, a control law satisfying \eqref{eq:cbf_condition} with equality guarantees a lower bound, determined by $\alpha_w$, on $H$ as $t\rightarrow\infty$. 
Moreover, because Theorem~\ref{thm:new_cbf} states that \eqref{eq:cbf_definition} holds for any $\alpha_0\in\mathcal{K}$, the choice of $\alpha_w$ is a free parameter.

\begin{remark} \label{rem:finite}
    Note that Lemma~\ref{lemma:approach} only guarantees $x\rightarrow \partial_{\alpha_w^{-1}(2)}\mathcal{S}$ as $t\rightarrow \infty$. There exist finite and fixed time extensions of Lemma~\ref{lemma:approach}, provided the derivative $\alpha_w'$ of $\alpha_w$ satisfies $\alpha_w'(\lambda_c) = \infty$ where $\lambda_c=\alpha_w^{-1}(2)$, such as $\alpha_w(\lambda) = \frac{2}{\lambda_c^{1/3}}((\lambda - \lambda_c)^{1/3} + \lambda_c^{1/3})$. However, this violates the assumption in Lemma~\ref{prior:cbf} that $\alpha_w$ is locally Lipschitz continuous. Instead, we employ the fact that for every $\epsilon > \alpha_w^{-1}(2)$, there exists a finite time $T$ such that $x\rightarrow \partial_{\epsilon}\mathcal{S}$ as $t\rightarrow T$.
\end{remark}

Note that Lemma~\ref{lemma:approach} applies to any CBF satisfying the assumptions of Lemma~\ref{prior:cbf}, and thus to $H_0$, $H_1$ as well. The next step is then to use the terminal set $\partial_{\alpha^{-1}_w(2)}\mathcal{S}$ in Lemma~\ref{lemma:approach} to generate the desired lower bounds on $h$ and $\dot{h}$. 
First, we introduce one more metric as follows.

\begin{definition}[Feasibility Margin]
    Let $l_h$ be the Lipschitz constant of $h$ in a neighborhood $\mathcal{Y}\subseteq\reals^n$ of the set $\{x\in\reals^n \mid h(t,x) = 0\}$. Given constants $\gamma_1, \gamma_2$, the \emph{feasibility margin} is
    \begin{equation}
        c_f(\gamma_1, \gamma_2) \triangleq \gamma_2 - (\gamma_1 + 2 l_h w_{x,\textrm{max}}) \label{eq:feasibility_margin} \,.
    \end{equation}
\end{definition}

\noindent
The feasibility margin is important because the condition $\dot{h}\in[\gamma_1, \gamma_2]$ is robustly guaranteed whenever $\dot{h}_w\in [\gamma_2 - c_f, \gamma_2]$. Thus, we require that $[\gamma_2 - c_f, \gamma_2]$ be nonempty, 
or equivalently that $c_f \geq 0$. 
Combining landing/docking with CBFs then further necessitates that $c_f > 0$, as in the following theorem. 

\begin{theorem} \label{thm:landing}
    Given constants $\gamma_2 > \gamma_1 > 0$ satisfying $c_f(\gamma_1,\gamma_2) > 0$ in \eqref{eq:feasibility_margin} and a function $H_1:\mathcal{D}\times\reals^n$ of the form \eqref{eq:new_cbf} with $\delta = \frac{1}{2}\gamma_2^2$, suppose $H_1$ meets the assumptions of Lemma~\ref{prior:cbf}. Suppose $\alpha_w\in\mathcal{K}$ in \eqref{eq:cbf_condition} satisfies
    \begin{equation}
        \hspace{-3pt}\alpha_w^{-1}(2) = -\Phi^{-1}\left( \half \gamma_2^2 + \Phi(0) - \half\left( 2 l_h w_{x,\textrm{max}} + \gamma_1\right)^2 \right) . \hspace{-2pt} \label{eq:alpha_property}
    \end{equation}
    If the control input $u(t,x)$ satisfies \eqref{eq:cbf_condition} with equality for all $x\in\mathcal{S}_1(t),t\in\mathcal{D}$ and the initial condition satisfies $x(t_0)\in\mathcal{S}_1(t_0)$, then there exists a finite $t_f\geq t_0$ such that $x(t_f)$ corresponds to landing at $t_f$.
\end{theorem}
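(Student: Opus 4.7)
The plan is to combine the safety guarantee of Lemma~\ref{prior:cbf}, the velocity upper bound of Theorem~\ref{thm:safety}, the finite-time convergence of $H_1$ from Lemma~\ref{lemma:approach} and Remark~\ref{rem:finite}, and the specific choice of $\alpha_w^{-1}(2)$ in \eqref{eq:alpha_property}, to show that the trajectory reaches $h=0$ in finite time with $\dot{h}\in[0,\gamma_2]$.

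Since $u$ satisfies \eqref{eq:cbf_condition} with equality on $\mathcal{S}_1$ and $x(t_0)\in\mathcal{S}_1(t_0)$, Lemma~\ref{prior:cbf} gives $x(t)\in\mathcal{S}_1(t)$ for all $t\in\mathcal{D}$, hence $H_1(t,x(t))\le 0$, which by Theorem~\ref{thm:safety} already yields $\dot{h}\le\gamma_2$ at any time when $h=0$. By Lemma~\ref{lemma:approach} and Remark~\ref{rem:finite}, for every $\epsilon>\alpha_w^{-1}(2)$ there exists a finite $T_\epsilon\ge t_0$ with $H_1(T_\epsilon,x(T_\epsilon))\ge -\epsilon$, so the trajectory is funneled into an arbitrarily small neighborhood of the level set $\{H_1=-\alpha_w^{-1}(2)\}$ in finite time.

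Next, I would unpack the choice \eqref{eq:alpha_property}. Substituting $h=0$ and $H_1=-\alpha_w^{-1}(2)$ into the defining relation $\Phi(h)-\half\abssq(\dot{h}_w)+\delta=\Phi(H_1)$, and using $\delta=\half\gamma_2^2$, yields $\abssq(\dot{h}_w)=(2l_h w_{x,\textrm{max}}+\gamma_1)^2$, hence $\dot{h}_w=2l_h w_{x,\textrm{max}}+\gamma_1>0$. Because $h$ is $l_h$-Lipschitz on $\mathcal{X}$, the identity $\dot{h}_w-\dot{h}\le 2\|\nabla h\|w_{x,\textrm{max}}\le 2l_h w_{x,\textrm{max}}$ gives $\dot{h}\ge \gamma_1>0$ at any point on this level set with $h=0$, which in particular satisfies the landing lower bound $\dot{h}\ge 0$. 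The feasibility assumption $\gamma_2>\gamma_1+2l_h w_{x,\textrm{max}}$ is what ensures this lower bound is strictly below the upper bound $\gamma_2$ from Theorem~\ref{thm:safety}, i.e.\ that the prescribed landing window is nonempty.

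The final and hardest step is to argue that the trajectory actually passes through $h=0$ in finite time, rather than merely approaching the level set $\{H_1=-\alpha_w^{-1}(2)\}$ at a point with $h<0$. I would use a continuity and forward-progress argument on the $(h,\dot{h}_w)$-projection: for $H_1$ sufficiently close to $-\alpha_w^{-1}(2)$ and $h$ sufficiently close to $0$ from below, the defining relation forces $\abssq(\dot{h}_w)>(2l_h w_{x,\textrm{max}})^2$, so $\dot{h}_w>2l_h w_{x,\textrm{max}}$ and hence $\dot{h}>0$, which makes $h$ strictly increase toward $0$ and cross it in finite time. I expect this ``funneling'' argument to be the main obstacle, since it must rule out pathological behavior in which $H_1\to -\alpha_w^{-1}(2)$ while $h$ remains bounded away from $0$; the key facts to exploit are the uniform positive lower bound on $\dot{h}$ in a neighborhood of the target and the finite-time entry into $\partial_\epsilon\mathcal{S}_1$ furnished by Remark~\ref{rem:finite}, whose combination yields a strictly positive integrated displacement in $h$ over any sufficiently long time interval.
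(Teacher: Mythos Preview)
Your overall strategy matches the paper's: invoke Lemma~\ref{prior:cbf} and Theorem~\ref{thm:safety} for the upper bound $\dot{h}\le\gamma_2$, use Lemma~\ref{lemma:approach} and Remark~\ref{rem:finite} to funnel the trajectory into $\partial_\epsilon\mathcal{S}_1$, and unpack \eqref{eq:alpha_property} at $h=0$ to get $\dot{h}\ge\gamma_1>0$. The computation you give at $h=0$ on the level set $H_1=-\alpha_w^{-1}(2)$ is exactly the paper's equations \eqref{eq:phi0_sub}--\eqref{eq:hdot_lower_bounded}.

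The gap you flag as the ``main obstacle'' is real, and your proposed fix does not close it. You restrict to ``$h$ sufficiently close to $0$ from below'' to obtain $\dot{h}>0$, but that is circular: you would first need the trajectory to enter such a neighborhood of $\{h=0\}$, which is precisely what you are trying to prove. The key idea you are missing is that the lower bound on $\dot{h}$ holds \emph{globally} on $\partial_{\alpha_w^{-1}(2)}\mathcal{S}_1\cap\{h\le 0\}$, not just near $h=0$. This follows because $\phi\le 0$ in \eqref{eq:variable_cbf_condition} forces $\Phi$ to be monotone decreasing, so $\Phi(h)\ge\Phi(0)$ whenever $h\le 0$; substituting this into the defining relation for $H_1$ shows that for any $x\in\partial_{\alpha_w^{-1}(2)}\mathcal{S}_1$ with $h(t,x)\le 0$ one has $\abssq(\dot{h}_w)\ge(2l_hw_{x,\textrm{max}}+\gamma_1)^2$, hence $\dot{h}\ge\gamma_1>0$. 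With this in hand the paper splits into two cases: either the trajectory enters $\partial_{\alpha_w^{-1}(2)}\mathcal{S}_1$ at some finite time $T$, after which it stays there (by Lemma~\ref{lemma:approach}) and $\dot{h}\ge\gamma_1$ drives $h$ to zero in finite time; or it never enters, in which case Remark~\ref{rem:finite} still gives, for any $\gamma\in(0,\gamma_1)$, a finite time after which the trajectory lies in a slightly larger superlevel set on which the same monotonicity argument yields $\dot{h}>\gamma$ whenever $h<0$, again forcing $h\to 0$ in finite time. The monotonicity of $\Phi$ is the single missing ingredient that dissolves the ``pathological behavior'' you were worried about.
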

\begin{proof}
    First, note that $c_f(\gamma_1,\gamma_2) > 0$ implies $\alpha_w^{-1}(2) > 0$ in \eqref{eq:alpha_property} (if instead it held that $\alpha_w^{-1}(2) \leq 0$, then $\alpha_w \notin \mathcal{K}$, so the objective would be infeasible). 
    
    Next, we note an important property of points inside the set $\partial_{\alpha_w^{-1}(2)}\mathcal{S}_1$, visualized in gray in Fig.~\ref{fig:explanation}. Let $x_c\in\reals^n$ be any point inside $\partial_{\alpha_w^{-1}(2)}\mathcal{S}_1(t_c)$ for some $t_c\in\mathcal{D}$, and suppose that $h(t_c, x_c) = 0$. Then at $x_c$, it holds that
    \begin{align}
        &-\alpha_w^{-1}(2) \leq H_1(t_c, x_c) \nonumber \\
        &\;\;\;\;\;\;\;\;\;\overset{\eqref{eq:new_cbf}}{=} \Phi^{-1}\bigg( \Phi\Big(\underbrace{h(t_c,x_c)}_{=0}\Big) - \frac{1}{2}\abssq\left(\dot{h}_w(t_c,x_c)\right) + \frac{1}{2}\gamma_2^2 \bigg) \nonumber \\
        &
        \Phi(-\alpha_w^{-1}(2)) \geq \Phi(0) - \frac{1}{2}\dot{h}_w(t_c,x_c)|\dot{h}_w(t_c,x_c)| + \frac{1}{2}\gamma_2^2 \label{eq:phi0_sub} \\
        &
        \dot{h}_w(t_c,x_c)|\dot{h}_w(t_c,x_c)| \geq  2\Phi(0) + \gamma_2^2 - 2 \Phi(-\alpha_w^{-1}(2)) \nonumber \\
        &
        \;\;\;\;\;\;\;\;\;\;\;\;\;\;\;\;\;\;\;\;\;\;\;\;\;\;\;\;\;\;\;\;\, \overset{\eqref{eq:alpha_property}}{=} (2 l_h w_{x,\textrm{max}} + \gamma_1)^2 \label{eq:hdot_lower_bounded}
    \end{align}
    The right hand side of \eqref{eq:hdot_lower_bounded} is positive, so $\dot{h}_w(t_c,x_c)$ will be positive as well. Specifically, $\dot{h}_w(t_c,x_c) \geq 2 l_h w_{x,\textrm{max}}+\gamma_1$. By definition, $l_h$ satisfies $l_h \geq \| \nabla h(t_c, x_c) \|$, and $\dot{h}$ satisfies $\dot{h}(t,x,w_x) \geq \dot{h}_w(t,x) - 2\|\nabla h(t,x)\| w_{x,\textrm{max}}$. It follows that $\dot{h}(t_c,x_c,w_x) \geq \dot{h}_w(t_c,x_c) - 2 l_h w_{x,\textrm{max}} \geq \gamma_1 > 0$. 
    Visually, this means that $\dot{h}_w(t_c,x_c)$ always lies on the intersection of the gray region and the magenta line in Fig~\ref{fig:explanation}, which implies $\dot{h}(t_c,x_c,w_x)$ always lies on the magenta line (i.e. the docking states).
    Moreover, since 
    $\Phi$ is monotone decreasing, 
    $\Phi(\lambda) > \Phi(0)$ for all $\lambda < 0$. It follows from \eqref{eq:phi0_sub} that if $x\in\partial_{\alpha_w^{-1}(2)}\mathcal{S}_1(t)$ and $h(t,x) < 0$, then $\dot{h}(t,x,w_x) > \gamma_1$, as shown by how the gray region in Fig.~\ref{fig:explanation} occurs for larger $\dot{h}_w$ (and therefore larger $\dot{h}$) as $h$ decreases.
    
    Safety with respect to $\gamma_2$ is already guaranteed by Theorem~\ref{thm:safety} since $\delta=\frac{1}{2}\gamma_2^2$, so we will now use the above property and Lemma~\ref{lemma:approach} to guarantee proximity, i.e. that there exists $t_f < \infty$ such that $h(t_f,x(t_f)) = 0$. 
    We will divide this into two cases, depending on $x(t_0)$. 
    The assumption $x(t_0)\in\mathcal{S}_1(t_0)$ implies $h(t_0,x(t_0)) \leq 0$, so going forward we assume $h(t,x(t))\leq 0$, as otherwise landing already occurred.
    
    First, in the case that $x(t_0) \notin \partial_{\alpha_w^{-1}(2)}\mathcal{S}_1(t_0)$, then by Lemma~\ref{lemma:approach}, $H_1$ is initially increasing and will keep increasing at least until the state reaches $\partial_{\alpha_w^{-1}(2)}\mathcal{S}_1$. 
    If the state reaches $\partial_{\alpha_w^{-1}(2)}\mathcal{S}_1$ before landing, then see the second case. 
    Otherwise, since $x(t)$ is converging to a set where $h(t,x(t)) < 0 \implies \dot{h}(t,x(t),w_x) > \gamma_1$, it follows from Remark~\ref{rem:finite} that for every $\gamma \in (0, \gamma_1)$, there exists a finite time $T(\gamma)$ such that the trajectory $x(t), t\geq T$ is sufficiently close to $\partial_{\alpha_w^{-1}(2)}\mathcal{S}_1$ that $h(t,x(t)) < 0 \implies \dot{h}(t,x(t),w_x) > \gamma$. For example, in Fig.~\ref{fig:explanation}, all states on the top-most orange line (level set where $H_1=-1.4\alpha_w^{-1}(2)$) satisfy $\dot{h} > \gamma = 0.1$ as long as $h < 0$. 
    Since $\dot{h}(t,x(t))$ becomes lower bounded by $\gamma>0$ within finite time $T$, there must exist a finite $t_f$ at which $h(t_f,x(t_f))=0$.
    
    Second, in the case that $x(T)\in\partial_{\alpha_w^{-1}(2)}\mathcal{S}_1(T)$ for any $T \in \mathcal{D}$ before landing occurs, then Lemma~\ref{lemma:approach} implies that $x(t)$ will remain in $\partial_{\alpha_w^{-1}(2)}\mathcal{S}_1$ for all $t\geq T$ (as occurs for the trajectories plotted with solid lines in Fig.~\ref{fig:explanation}). It follows from \eqref{eq:hdot_lower_bounded} that $\dot{h}(t,x(t),w_x) > \gamma_1$ for all $t\geq T$ as long as $h(t,x(t)) < 0$. Thus, $h$ will keep increasing until $t_f$ such that $h(t_f,x(t_f))=0$, and at $t_f$ it holds that $\dot{h}(t_f,x(t_f)) \geq \gamma_1$. Thus, landing as in Definition~\ref{def:landing} is guaranteed for finite $t_f$ in both cases. 
\end{proof}

\begin{figure}
    \centering
    \includegraphics[width=0.98\columnwidth]{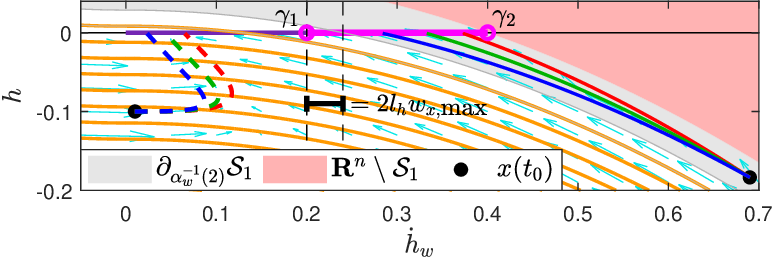}
    \caption{A phase diagram of $h$ and $\dot{h}_w$ when $w_x\equiv w_u \equiv 0$, along with trajectories from two initial states $x(t_0)$ inside and outside of $\partial_{\alpha_w^{-1}(2)}\mathcal{S}_1$. From each initial condition, three trajectories are propagated in the presence of: disturbances that decrease $\dot{H}_1$ (blue), no disturbance (green), and disturbances that increase $\dot{H}_1$ (red).
    The magenta line represents docking states. 
    The orange lines are level sets of $H_1$. Trajectories converge asymptotically to the $H_1 \geq -\alpha_w^{-1}(2)$ superlevel set (gray region), and therefore converge in finite time to the lower superlevel sets and to the horizontal axis.}
    \label{fig:explanation}
\end{figure}

That is, we have a condition, given by \eqref{eq:cbf_condition} and \eqref{eq:alpha_property},
under which landing is guaranteed. Note that while landing occurs for $\dot{h} \geq 0$, Theorem~\ref{thm:landing} requires us to encode the controller with a parameter $\gamma_1$ strictly greater than zero. Otherwise, landing is only guaranteed as $t\rightarrow\infty$. Also note that as the 
feasibility margin $c_f$ becomes smaller, the value $\lambda_c=\alpha_w^{-1}(2)$ in \eqref{eq:alpha_property} where $\alpha(\lambda_c) = 2$ becomes smaller, and thus
the slope of $\alpha_w$ becomes larger. In practice, for digital controllers, following a steep $\alpha_w$ curve will require a faster controller update cycle. Next, we cover the docking case as follows.

\begin{corollary} \label{cor:docking}
    Suppose the assumptions of Theorem~\ref{thm:landing}. If furthermore $x(t_0) \in \partial_{\alpha_w^{-1}(2)}\mathcal{S}_1(t_0)$, then there exists a finite $t_f\geq t_0$ such that $x(t_f)$ corresponds to docking at $t_f$.
\end{corollary}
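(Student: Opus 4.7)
The plan is to reduce the corollary to the second case handled in the proof of Theorem~\ref{thm:landing}. The extra hypothesis $x(t_0)\in\partial_{\alpha_w^{-1}(2)}\mathcal{S}_1(t_0)$ is precisely what was invoked, but with a generic $T$, in that second case. So I would take $T=t_0$ and verbatim re-use that branch of the argument.

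First, I would note that $x(t_0)\in\mathcal{S}_1(t_0)$ is implied by $x(t_0)\in\partial_{\alpha_w^{-1}(2)}\mathcal{S}_1(t_0)$, so Lemma~\ref{prior:cbf} (via Theorem~\ref{thm:new_cbf}) gives forward invariance of $\mathcal{S}_1$ under the controller that satisfies \eqref{eq:cbf_condition} with equality. Then I would invoke Lemma~\ref{lemma:approach} to conclude that, since $H_1(t_0,x(t_0))\geq -\alpha_w^{-1}(2)$ and $\dot{H}_1\geq 0$ whenever $H_1\leq -\alpha_w^{-1}(2)$ (as shown in \eqref{eq:safety_proof}), the trajectory in fact remains in $\partial_{\alpha_w^{-1}(2)}\mathcal{S}_1(t)$ for all $t\geq t_0$ as long as $h(t,x(t))\leq 0$.

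Next, I would apply the computation \eqref{eq:hdot_lower_bounded} together with the monotonicity of $\Phi$ established in the proof of Theorem~\ref{thm:landing} to conclude that, at every such time, $\dot{h}(t,x(t),w_x)\geq \gamma_1>0$ whenever $h(t,x(t))\leq 0$ (with equality possible only when $h(t,x(t))=0$, by construction of \eqref{eq:alpha_property} and Assumption~\ref{as:velocities}). A straightforward integration then gives $h(t,x(t))\geq h(t_0,x(t_0)) + \gamma_1 (t-t_0)$ as long as $h$ remains non-positive, so there is a finite $t_f\leq t_0 - h(t_0,x(t_0))/\gamma_1$ at which $h(t_f,x(t_f))=0$. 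At $t_f$, the bound $\dot{h}(t_f,x(t_f))\geq \gamma_1$ still holds by the same inequality \eqref{eq:hdot_lower_bounded}, while Theorem~\ref{thm:safety} (with $\delta=\tfrac{1}{2}\gamma_2^2$) gives $\dot{h}(t_f,x(t_f))\leq \gamma_2$. Together these place $\dot{h}(t_f,x(t_f))\in[\gamma_1,\gamma_2]$, so docking as in Definition~\ref{def:docking} is achieved at the finite time $t_f$.

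There is no real obstacle here: the corollary is essentially a re-packaging of the second case of Theorem~\ref{thm:landing} in which the initial state is already on the inner boundary $\partial_{\alpha_w^{-1}(2)}\mathcal{S}_1$, which removes the ``asymptotic approach'' issue that forced Theorem~\ref{thm:landing} to settle for any $\gamma\in(0,\gamma_1)$ rather than $\gamma_1$ itself. The only item that deserves a sentence of care is verifying that the strict inequality $\dot{h}\geq \gamma_1$ (not merely some $\gamma<\gamma_1$) is available from time $t_0$ onward, which follows immediately from \eqref{eq:hdot_lower_bounded} applied at every $t\geq t_0$.
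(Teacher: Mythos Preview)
Your proposal is correct and follows essentially the same approach as the paper: invoke Lemma~\ref{lemma:approach} to keep the trajectory in $\partial_{\alpha_w^{-1}(2)}\mathcal{S}_1$ from $t_0$ onward, then reuse the second case of Theorem~\ref{thm:landing} via \eqref{eq:hdot_lower_bounded} to get $\dot{h}\geq\gamma_1$ and hence finite-time docking. The paper's proof is terser, but your added details (the explicit integration bound on $t_f$ and the explicit call to Theorem~\ref{thm:safety} for the $\gamma_2$ upper bound) are sound refinements of the same argument.
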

\begin{proof}
    This result follows immediately from the proof of Theorem~\ref{thm:landing}. Since $x(t_0)\in\partial_{\alpha_w^{-1}(2)}\mathcal{S}_1(t_0)$, it follows that $x(t)$ remains in $\partial_{\alpha_w^{-1}(2)}\mathcal{S}_1$ for the entire closed-loop trajectory. By the argument in the final case of Theorem~\ref{thm:landing}, it follows that $\dot{h}(t,x(t)) \geq \gamma_1$ for all $t$ until $t_f$ such that $h(t_f,x(t_f))= 0$. Thus, docking as in Definition~\ref{def:docking} occurs for finite $t_f$.
\end{proof}

Thus, we can prescribe a minimum docking velocity $\gamma_1$ as well. 
Physically, the additional condition that $x(t_0) \in \partial_{\alpha_w^{-1}(2)}\mathcal{S}_1(t_0)$ means that the chaser agent begins the maneuver with a sufficiently large velocity relative to the target. If this is not the case, then the chaser agent may not be able to accelerate to the required velocity before contacting the target, as occurs for the trajectories plotted with dashed lines in Fig.~\ref{fig:explanation}.

\section{Simulations} \label{sec:simulations}

To verify the above conditions, we conducted two simulations. The first considered landing on an asteroid with nontrivial gravity and no atmosphere. The second considered docking in a low Earth orbit with multiple constraints.

For the first problem, we desire for a spacecraft to land on the surface of the asteroid Ceres. Let $r,v\in\reals^3$ be the position and velocity of the spacecraft with respect to the center of Ceres and $\mu = 6.26325(10)^{10} \textrm{ m}^3/\textrm{s}^2$ the gravitational parameter, so the dynamics are
\begin{equation}
    \dot{x} = \begin{bmatrix} \dot{r} \\ \dot{v} \end{bmatrix} = \begin{bmatrix} v \\ -\frac{\mu}{\|r\|^3} r \end{bmatrix} + \begin{bmatrix} 0 \\ u \end{bmatrix} + \begin{bmatrix} w_x \\ w_u \end{bmatrix} \,,
\end{equation}
where $u \in \mathcal{U} \subset \reals^3$ is the control input. Let $\mathcal{U} = \{ u \in \reals^3 \mid \|u\|_\infty \leq \bar{u}\}$ where $\bar{u} = 0.5 \textrm{ m/s}^2$, $w_{u,\textrm{max}} = 0.025 \textrm{ m/s}^2$, and $w_{x,\textrm{max}} = 0.01 \textrm{ m/s}$. Let $h$ be the distance from the surface of Ceres, modelled as a perfect sphere, $h = \rho - \|r\|$, where $\rho = 476000 \textrm{ m}$. 
We note that condition \eqref{eq:variable_cbf_condition} is satisfied for $\Phi(\lambda) = \frac{\mu}{\rho - \lambda} + (w_{u,\textrm{max}}-\bar{u})\lambda$  \cite{Automatica}. Let $H_1$ be as in \eqref{eq:new_cbf}, and choose $\gamma_1 = 0.1 \textrm{ m/s}$ and $\gamma_2 = 1.5 \textrm{ m/s}$, so $\delta = 1.125$ in \eqref{eq:new_cbf}. This places the zero vector outside $\mathcal{S}_1$, so $h$ and $H_1$ are differentiable everywhere on $\mathcal{S}_1$. For simplicity, choose $\alpha_w(\lambda) = k \lambda$, where 
condition \eqref{eq:alpha_property} implies
$k = 0.355$. We then apply the controller
\begin{equation}
    u(t,x) = \nabla H_1(t,x) g(t,x) \max_{\substack{b \nabla H_1 g \textrm{ satsifies \eqref{eq:cbf_condition}} \\ b \nabla H_1 g \in \mathcal{U} } } b  \label{eq:landing_law} \,,
\end{equation}
and simulated the spacecraft until landing occurred%
\footnotemark.
In practice, $u$ in \eqref{eq:landing_law} satisfies condition \eqref{eq:cbf_condition} with equality at all points except where \eqref{eq:cbf_condition} allows $H_1$ to increase at a rate that is unachievable within the input constraints. Since $H_1$ is constructed with $\Phi$ satisfying condition \eqref{eq:variable_cbf_condition}, the first constraint on the maximization in \eqref{eq:landing_law} will never require $H_1$ to decrease at a rate that is unachievable within the input constraints. Thus, the maximization in \eqref{eq:landing_law} is always feasible. For this simulation, we let $w_u,w_x$ be random bounded disturbances. The resultant trajectory is shown in Fig.~\ref{fig:landing_trajectory}, the altitude above Ceres is shown in Fig.~\ref{fig:landing_altitude}, and the control inputs are shown in Fig.~\ref{fig:landing_control}. As expected, the control inputs always remained within the allowable set $\mathcal{U}$, and the spacecraft achieved landing in 3236 seconds with $\dot{h}(t_f,x(t_f)) = 1.46 \textrm{ m/s} < \gamma_2$.

\begin{figure}
    \centering
    \includegraphics[width=0.9\columnwidth,trim={0in, 0in, 0.18in, 0.39in}, clip]{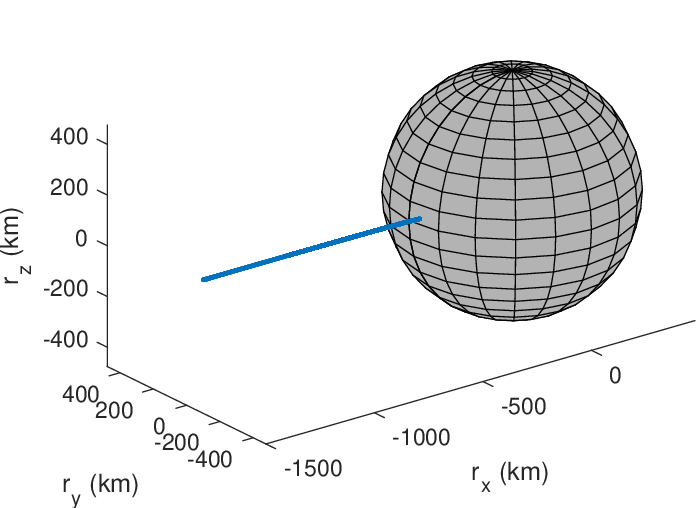}
    \caption{Trajectory of the spacecraft as it lands on Ceres}
    \label{fig:landing_trajectory} \vspace{12pt}
%
    \centering
    \includegraphics[width=\columnwidth]{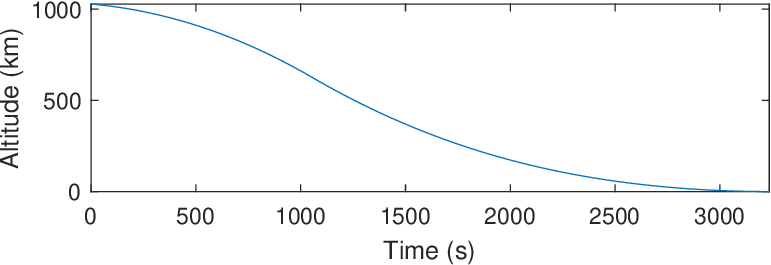}
    \caption{Altitude of the spacecraft as it lands on Ceres}
    \label{fig:landing_altitude} \vspace{12pt}
%
    \centering
    \includegraphics[width=\columnwidth]{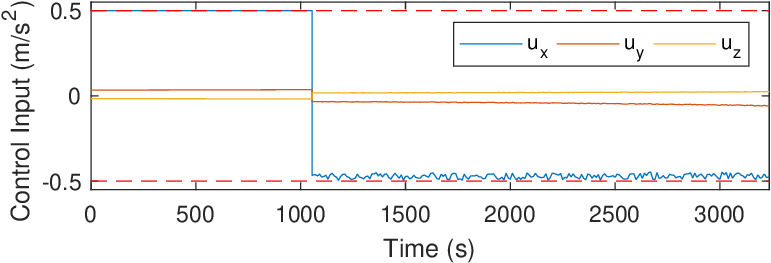}
    \caption{Control inputs of the spacecraft as it lands on Ceres}
    \label{fig:landing_control}
\end{figure}

For the second problem, we desire for a chaser spacecraft to dock with a target spacecraft in a 400 km altitude circular Earth orbit. Suppose that the chaser coordinates relative to the target are $x_1, x_2 \in \reals$ and follow the HCW dynamics
\begin{equation}
    \hspace{-2pt} \begin{bmatrix} \dot{x}_1 \\ \dot{x}_2 \\ \ddot{x}_1 \\ \ddot{x}_2 \end{bmatrix} = \begin{bmatrix} 0 & 0 & 1 & 0 \\ 0 & 0 & 0 & 1 \\ 3 n^2 & 0 & 0 & 2 n \\ 0 & 0 & -2 n & 0 \end{bmatrix} \begin{bmatrix} x_1 \\ x_2 \\ \dot{x}_1 \\ \dot{x}_2 \end{bmatrix} + \begin{bmatrix} 0 \\ 0 \\ u_1 \\ u_2 \end{bmatrix} + \begin{bmatrix} w_{x,1} \\ w_{x,2} \\ w_{u,1} \\ w_{u,2} \end{bmatrix} \hspace{-2pt} \label{eq:hcw}
\end{equation}
with $n = 0.00113 \textrm{ rad/s}$. Let $\mathcal{U} = \{ u\in\reals^2 \mid \| u\|_\infty \leq \bar{u}\}$ where $\bar{u} = 0.082 \textrm{ m/s}^2$, $w_{u,\textrm{max}} = 0.002 \textrm{ m/s}^2$, and $w_{x,\textrm{max}} = 0.001 \textrm{ m/s}$. For this problem, we desire that the chaser dock along a particular docking axis $\hat{a} = [0,\, 1]\transpose$, starting from behind the target (i.e. $x_2(t_0) < 0$). First, let $h = x_2$ encode the distance from the docking point, and let $H_1$ be a function of $h$ as in \eqref{eq:new_cbf} using $\Phi(\lambda) = -\tilde{u}_1 \lambda$ where $\tilde{u}_1 = 0.057 \textrm{ m/s}^2$. Let $\gamma_1 = 0.07 \textrm{ m/s}$ and $\gamma_2 = 0.12 \textrm{ m/s}$, so $\delta = 0.0072$. Again, let $\alpha_w(\lambda) = k_1\lambda$, where condition~\eqref{eq:alpha_property} implies $k_1 = 25$.
Next, to ensure convergence along the docking axis, define $h_r = x_1 - \Delta$ and $h_l = -x_{1} - \Delta$, where the tolerance $\Delta = 0.03 \textrm{ m}$.%
\footnotetext{All simulation code can be found at \url{https://github.com/jbreeden-um/phd-code/tree/main/2022}}
Then define the CBFs $H_{0,r}$ and $H_{0,l}$ as functions of $h_r$ and $h_l$, respectively, as in \eqref{eq:variable_cbf} using $\Phi(\lambda) = -\tilde{u}_0 \lambda$ where $\tilde{u}_0 = 0.021 \textrm{ m/s}^2$. For these two constraints, we use the original CBF $H_0$ in \eqref{eq:variable_cbf}, instead of the relaxed CBF $H_1$ in \eqref{eq:new_cbf} as we seek to guarantee that the tolerance $\Delta$ is not violated. For these two CBFs, apply the class-$\mathcal{K}$ functions $\alpha_r(\lambda) = \alpha_l(\lambda) = k_0 \lambda$ with $k_0 = 200$. Finally, we impose a velocity constraint $H_v(t,x) = \| [\dot{x}_1,\,\dot{x}_2] \|_\infty - v_{max}$ with $v_{max} = 10 \textrm{ m/s}$ and using $\alpha_v(\lambda) = k_v\lambda$ with $k_v = 20$. Define
\begin{multline}
    \hspace{-6pt}u_{nom}(t,x) = \big[ \alpha_w(-H_1(\cdot))W(\cdot) - W(\cdot) - \partial_t H_1(\cdot) \\ - \nabla H_1(\cdot) f(\cdot) \big] \nabla H_1(\cdot) g(\cdot) / \|\nabla H_1(\cdot) g(\cdot) \|^2 - k_p x_1 \hspace{-6pt} \label{eq:u_nom}
\end{multline}

\begin{figure}
    \centering
    \includegraphics[width=\columnwidth]{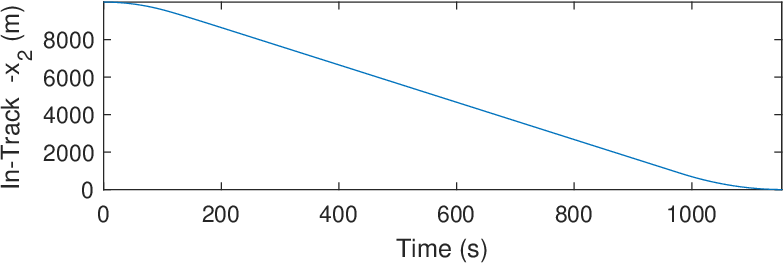}
    \caption{Distance between spacecraft along docking axis}
    \label{fig:docking_distance}
\end{figure}

\begin{figure}
    \centering
    \includegraphics[width=\columnwidth]{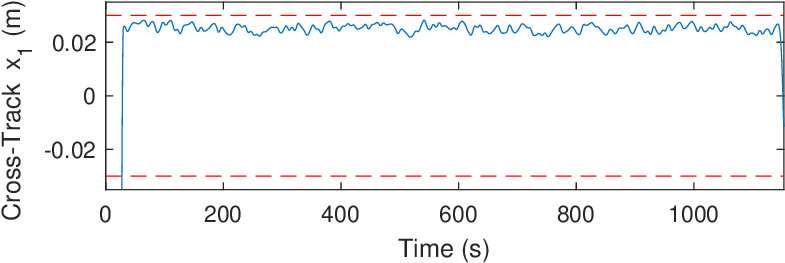}
    \caption{Distance between spacecraft orthogonal to docking axis}
    \label{fig:docking_cross}
\end{figure}

\begin{figure}[t!]
    \centering
    \includegraphics[width=\columnwidth]{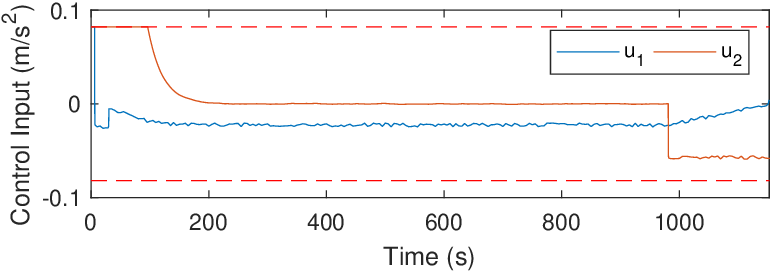}
    \caption{Control inputs of the chaser spacecraft}
    \label{fig:docking_control}
\end{figure}

\noindent
where $k_p = 0.1$. We then apply the controller
\begin{align}
    &u(t,x) = \label{eq:piecewise_controller} \\ &\;\;\;\;\;\;\;\;\;\; \begin{cases} \displaystyle \argmin_{\substack{u\in \mathcal{U}, \\ u \textrm{ satisfies \eqref{eq:cbf_condition} for } H_1, \\ u \textrm{ satisfies \eqref{eq:cbf_condition} for } H_{0,r} \\ u \textrm{ satisfies \eqref{eq:cbf_condition} for } H_v} } \hspace{-14pt} \| u - u_{nom}(t,x) \|^2 & H_{0,l}(t,x) > 0 \\
    \displaystyle \argmin_{\substack{u\in \mathcal{U}, \\ u \textrm{ satisfies \eqref{eq:cbf_condition} for } H_1, \\ u \textrm{ satisfies \eqref{eq:cbf_condition} for } H_{0,r}, \\ u \textrm{ satisfies \eqref{eq:cbf_condition} for } H_{0,l}  \\ u \textrm{ satisfies \eqref{eq:cbf_condition} for } H_v} } \hspace{-16pt} \| u - u_{nom}(t,x) \|^2 & H_{0,l}(t,x) \leq 0 \end{cases} \nonumber \,.
\end{align}
Note that the controller in \eqref{eq:piecewise_controller} is broken into two cases because for most initial conditions, only one of the CBFs $H_{0,r},H_{0,l}$ will be nonpositive, in this case $H_r(t_0,x(t_0)) \leq 0$. The term $-k_p x_1$ of $u_{nom}$ works to drive the spacecraft close to the docking axis, and once it is sufficiently close, we apply both the $H_{0,r}$ and $H_{0,l}$ safety conditions. Meanwhile, the first term of $u_{nom}$ in \eqref{eq:u_nom} satisfies condition \eqref{eq:cbf_condition} with equality to ensure docking occurs in finite time, and the quadratic program in \eqref{eq:piecewise_controller} ensures safety and input constraint satisfaction. Note that $\tilde{u}_1$ and $\tilde{u}_0$ were chosen so that \eqref{eq:variable_cbf_condition} is \emph{strictly} satisfied for each $\Phi$. This results in the QPs in \eqref{eq:piecewise_controller} being \emph{strictly} feasible and therefore locally Lipschitz continuous in $x$ \cite[Thm. 2.1]{slaters}.

A docking simulation
with random bounded disturbances is shown in Figs.~\ref{fig:docking_distance}-\ref{fig:docking_cross} and the control inputs are shown in Fig.~\ref{fig:docking_control}. Docking occurred within the constraints in 1153 seconds with a terminal velocity of $\dot{h}(t_f,x(t_f)) = 0.11 \textrm{ m/s} \in [\gamma_1,\gamma_2]$. We note that $x_1$ in Fig.~\ref{fig:docking_cross} converged quickly to $x_1 \in[-\Delta,\Delta]$, and then spent a lot of time near $x_1 = \Delta$. This is because the uncontrolled dynamics in \eqref{eq:hcw} tend to cause $x_1$ to increase. The proportional control law $-k_p x_1$ in \eqref{eq:u_nom} does not adequately account for these dynamics, so instead the safety constraint $H_{0,r}$ ensures that $x_1 \leq \Delta$ for all time.

\section{Conclusion} \label{sec:conclusions}

We have demonstrated how robust CBFs introduce a margin on how close system trajectories can come to the boundary of the safe set. We then developed a method for tuning this margin and applied it to guaranteeing the finite-time execution of a landing and docking maneuver with a terminal velocity inside a specified interval in the presence of bounded matched and unmatched disturbances. Future work includes studying appropriate CBFs for longer maneuvers with additional obstacles, as well as the application of these techniques under digital controllers with fixed update cycles.

\bibliographystyle{ieeetran}
\bibliography{sources}

\begin{thebibliography}{10}
\providecommand{\url}[1]{#1}
\csname url@rmstyle\endcsname
\providecommand{\newblock}{\relax}
\providecommand{\bibinfo}[2]{#2}
\providecommand\BIBentrySTDinterwordspacing{\spaceskip=0pt\relax}
\providecommand\BIBentryALTinterwordstretchfactor{4}
\providecommand\BIBentryALTinterwordspacing{\spaceskip=\fontdimen2\font plus
\BIBentryALTinterwordstretchfactor\fontdimen3\font minus
  \fontdimen4\font\relax}
\providecommand\BIBforeignlanguage[2]{{%
\expandafter\ifx\csname l@#1\endcsname\relax
\typeout{** WARNING: IEEEtran.bst: No hyphenation pattern has been}%
\typeout{** loaded for the language `#1'. Using the pattern for}%
\typeout{** the default language instead.}%
\else
\language=\csname l@#1\endcsname
\fi
#2}}

\bibitem{CBF_Tutorial}
A.~D. {Ames}, S.~{Coogan}, M.~{Egerstedt}, G.~{Notomista}, K.~{Sreenath}, and
  P.~{Tabuada}, ``Control barrier functions: Theory and applications,'' in
  \emph{2019 18th European Control Conference (ECC)}, 2019, pp. 3420--3431.

\bibitem{Robust_CBFs}
X.~Xu, P.~Tabuada, J.~W. Grizzle, and A.~D. Ames, ``Robustness of control
  barrier functions for safety critical control,'' \emph{IFAC-PapersOnLine},
  vol.~48, no.~27, pp. 54 -- 61, 2015.

\bibitem{tunable_safety}
A.~Alan, A.~J. Taylor, C.~R. He, G.~Orosz, and A.~D. Ames, ``Safe controller
  synthesis with tunable input-to-state safe control barrier functions,'' pp.
  908--913, 2022.

\bibitem{RobustCBFsOld}
M.~Jankovic, ``Robust control barrier functions for constrained stabilization
  of nonlinear systems,'' \emph{Automatica}, vol.~96, pp. 359--367, 2018.

\bibitem{garg2020robust}
K.~Garg and D.~Panagou, ``Robust control barrier and control lyapunov functions
  with fixed-time convergence guarantees,'' in \emph{2021 American Control
  Conference (ACC)}, 2021, pp. 2292--2297.

\bibitem{Automatica}
\BIBentryALTinterwordspacing
J.~Breeden and D.~Panagou, ``Robust control barrier functions under high
  relative degree and input constraints for satellite trajectories,''
  \emph{Automatica}, 2021, under review. [Online]. Available:
  \url{https://arxiv.org/abs/2107.04094v1}
\BIBentrySTDinterwordspacing

\bibitem{Docking_APFs}
I.~Lopez and C.~R. Mclnnes, ``Autonomous rendezvous using artificial potential
  function guidance,'' \emph{Journal of Guidance, Control, and Dynamics},
  vol.~18, no.~2, pp. 237--241, 1995.

\bibitem{Docking_Safety_APFs}
H.~Dong, Q.~Hu, and M.~R. Akella, ``Safety control for spacecraft autonomous
  rendezvous and docking under motion constraints,'' \emph{Journal of Guidance,
  Control, and Dynamics}, vol.~40, no.~7, pp. 1680--1692, 2017.

\bibitem{Coupled_APF_journal}
------, ``Dual-quaternion-based spacecraft autonomous rendezvous and docking
  under six-degree-of-freedom motion constraints,'' \emph{Journal of Guidance,
  Control, and Dynamics}, vol.~41, no.~5, pp. 1150--1162, 2018.

\bibitem{Adaptive_APF}
R.~Zappulla, H.~Park, J.~Virgili-Llop, and M.~Romano, ``Real-time autonomous
  spacecraft proximity maneuvers and docking using an adaptive artificial
  potential field approach,'' \emph{IEEE Transactions on Control Systems
  Technology}, vol.~27, no.~6, pp. 2598--2605, 2019.

\bibitem{OC}
J.~Ventura, M.~Ciarcià, M.~Romano, and U.~Walter, ``Fast and near-optimal
  guidance for docking to uncontrolled spacecraft,'' \emph{Journal of Guidance,
  Control, and Dynamics}, vol.~40, no.~12, pp. 3138--3154, 2017.

\bibitem{Docking_MPC}
A.~Weiss, M.~Baldwin, R.~S. Erwin, and I.~Kolmanovsky, ``Model predictive
  control for spacecraft rendezvous and docking: Strategies for handling
  constraints and case studies,'' \emph{IEEE Transactions on Control Systems
  Technology}, vol.~23, no.~4, pp. 1638--1647, 2015.

\bibitem{SMC}
D.~Lee and G.~Vukovich, ``Robust adaptive terminal sliding mode control on
  se(3) for autonomous spacecraft rendezvous and docking,'' \emph{Nonlinear
  Dynamics}, vol.~83, pp. 2263--2279, 2016.

\bibitem{RL}
C.~E. Oestreich, R.~Linares, and R.~Gondhalekar, ``Autonomous
  six-degree-of-freedom spacecraft docking maneuvers via reinforcement
  learning,'' \emph{arXiv}, 2020.

\bibitem{adaptive}
L.~Sun and J.~Jiang, ``Saturated adaptive relative motion coordination of
  docking ports in space close-range rendezvous,'' \emph{IEEE Transactions on
  Aerospace and Electronic Systems}, vol.~56, no.~6, pp. 4889--4898, 2020.

\bibitem{slaters}
S.~Dempe, ``Directional differentiability of optimal solutions under slater's
  condition,'' \emph{Mathematical Programming}, vol.~59, pp. 49--69, 1993.

\end{thebibliography}

\end{document}